\theoremstyle{plain}
\newtheorem{thm}{Theorem}[section]
\newtheorem{lem}[thm]{Lemma}
\newtheorem{con}[thm]{Conjecture}
\numberwithin{equation}{section}
\title[Polynomial image coprime with the $n$th term of a linear recurrence]
      {On numbers $n$ with polynomial image coprime with the $n$th term of a linear recurrence}
\author[D.~Mastrostefano]{Daniele Mastrostefano}
\address{University of Warwick, Mathematics Institute, Zeeman Building, Coventry, CV4 7AL, England}
\email{danymastro93@hotmail.it}
\author[C.~Sanna]{Carlo Sanna\textsuperscript{$\dagger$}}
\address{Universit\`a degli Studi di Torino\\Department of Mathematics\\Turin, Italy}
\thanks{$\dagger$ C.~Sanna is a member of INdAM group GNSAGA}
\email{carlo.sanna.dev@gmail.com}
\keywords{greatest common divisor; linear recurrences; natural densities}
\subjclass[2010]{Primary: 11B37. Secondary: 11A07, 11B39, 11N25.}
\begin{document}

\begin{abstract}
Let $F$ be an integral linear recurrence, $G$ be an integer-valued polynomial splitting over the rationals, and $h$ be a positive integer. 
Also, let $\mathcal{A}_{F,G,h}$ be the set of all natural numbers $n$ such that $\gcd(F(n), G(n)) = h$.
We prove that $\mathcal{A}_{F,G,h}$ has a natural density.
Moreover, assuming $F$ is non-degenerate and $G$ has no fixed divisors, we show that $\mathbf{d}(\mathcal{A}_{F,G,1}) = 0$ if and only if $\mathcal{A}_{F,G,1}$ is finite.
\end{abstract}

\maketitle
\section{Introduction}

An \emph{integral linear recurrence} is a sequence of integers $F(n)_{n\geq 0}$ such that
\begin{equation}\label{equ:linear}
F(n)=a_{1}F(n-1)+\cdots+a_{k}F(n-k),
\end{equation}
for all integers $n\geq k$, for some fixed $a_1, \dots, a_k\in\mathbb{Z}$, with $a_k\neq 0$,
We recall that $F$ is said to be \emph{non-degenerate} if none of the ratios $\alpha_{i}/\alpha_{j}$ $(i \neq j)$ is a root of unity, where $\alpha_{1}, \dots,\alpha_{r}\in\mathbb{C}^*$ are all the pairwise distinct roots of the \emph{characteristic polynomial}
\begin{equation*}
\psi_{F}(X)=X^{k}-a_{1}X^{k-1}-\dots-a_{k}.
\end{equation*}
Moreover, $F$ is said to be a \emph{Lucas sequence} if $F(0)=0, F(1)=1,$ and $k=2$.
In~particular, the Lucas sequence with $a_1=a_2=1$ is known as the \emph{Fibonacci sequence}.
We refer the reader to \cite[Ch.~1--8]{MR1990179} for the basic terminology and theory of linear recurrences.

Given two integral linear recurrences $F$ and $G$, the arithmetic relations between the corresponding terms $F(n)$ and $G(n)$ have interested many authors.
For instance, the study of the positive integers $n$ such that $G(n)$ divides $F(n)$ is a classic problem which goes back to Pisot, and the major results have been given by van~der~Poorten~\cite{MR929097}, Corvaja and Zannier~\cite{MR1692189, MR1918678}. (See also~\cite{MR3679793} for a proof of the last remark in \cite{MR1918678}.)
In~particular, the special case in which $G = I$, where $I$ is the identity sequence given by $I(n) = n$ for all integers $n$, has attracted much attention; with results given by Alba~Gonz{\'a}lez, Luca, Pomerance, and Shparlinski~\cite{MR2928495}, under the hypothesis that $F$ is simple and non-degenerate, and by Andr\'e-Jeannin~\cite{MR1131414}, Luca and Tron~\cite{MR3409327}, Sanna~\cite{MR3606950}, Smyth~\cite{MR2592551}, and Somer~\cite{MR1271392}, when $F$ is a Lucas sequence or the Fibonacci sequence.

Furthermore, for large classes of integral linear recurrences $F,G$, upper bounds for $\gcd(F(n), G(n))$ have been proved by Bugeaud, Corvaja, and Zannier~\cite{MR1953049}, and by Fuchs~\cite{MR1964201}.
Also, Leonetti and Sanna~\cite{LS001} studied the integers of the form $\gcd(F(n), n)$, when $F$ is the Fibonacci sequence; while Sanna~\cite{SannaLog} determined all the moments of the function $n \mapsto \log(\gcd(F(n), n))$, for any non-degenerate Lucas sequence $F$.

For two integral linear recurrences $F,G$ and a positive integer $h$, let us define
\begin{equation*}
\mathcal{A}_{F,G,h} := \big\{ n \in \mathbb{N} : \gcd(F(n), G(n)) = h \big\} ,
\end{equation*}
and put also $\mathcal{A}_{F,G} := \mathcal{A}_{F,G,1}$.
Sanna~\cite{San001} proved the following result on $\mathcal{A}_{F,I}$.

\begin{thm}
Let $F$ be a non-degenerate integral linear recurrence.
Then the set $\mathcal{A}_{F, I}$ has a natural density.
Moreover, if $F / I$ is not a linear recurrence (of rational numbers) then $\mathbf{d}(\mathcal{A}_{F,I}) > 0$.
Otherwise, $\mathcal{A}_{F, I}$ is finite and, a fortiori, $\mathbf{d}(\mathcal{A}_{F, I}) = 0$.
\end{thm}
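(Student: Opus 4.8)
The plan is to pass to the complement and sieve prime by prime. Since $I(n)=n$, we have $\mathcal{A}_{F,I}=\{n:\gcd(F(n),n)=1\}$, whose complement is $\bigcup_{p}\mathcal{B}_p$ with $\mathcal{B}_p:=\{n:p\mid n \text{ and } p\mid F(n)\}$. For each prime $p$ the residue $F(n)\bmod p$ is eventually periodic (purely periodic once $p\nmid a_k$), so membership in $\mathcal{B}_p$ depends only on $n$ modulo $M_p:=\mathrm{lcm}(p,\pi_F(p))$, where $\pi_F(p)$ is the period; hence each $\mathcal{B}_p$ agrees up to finitely many terms with a union of residue classes and has a density $\delta_p:=\mathbf{d}(\mathcal{B}_p)$. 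I would then invoke the Davenport--Erd\H{o}s principle: if one proves the tail estimate $\overline{\mathbf{d}}\big(\bigcup_{p>y}\mathcal{B}_p\big)\to 0$ as $y\to\infty$, then $\bigcup_p\mathcal{B}_p$ has a density equal to $\lim_y\mathbf{d}\big(\bigcup_{p\le y}\mathcal{B}_p\big)$ (the finite unions being periodic), and therefore so does $\mathcal{A}_{F,I}$.

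The dichotomy is governed by the constant terms of the generalized power sum $F(n)=\sum_{i=1}^{r}P_i(n)\alpha_i^n$ with $P_i\in\overline{\mathbb{Q}}[X]$. I would first record the algebraic fact that $F/I$ is a linear recurrence over $\mathbb{Q}$ precisely when $P_i(0)=0$ for every $i$: indeed $F(n)/n=\sum_i\big(P_i(n)/n\big)\alpha_i^n$ is a generalized power sum exactly when $X\mid P_i(X)$ for all $i$. In that case $F/I$ satisfies a monic integer linear recurrence (its minimal characteristic polynomial divides $\psi_F$, hence is integral by Gauss's lemma), so the denominators of $F(n)/n$ are bounded by some fixed $d$; thus $n\mid dF(n)$ and $\gcd(F(n),n)\ge n/d>1$ for all $n>d$. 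This settles the easy direction: if $F/I$ is a linear recurrence then $\mathcal{A}_{F,I}$ is finite.

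For the main direction, suppose $F/I$ is not a linear recurrence, so $P_{i_0}(0)\neq 0$ for some $i_0$. The key is to estimate $\delta_p$ by restricting to $n=pm$ and reducing modulo a prime ideal above $p$: Frobenius gives $\alpha_i^{pm}=(\alpha_i^p)^m$ while $P_i(pm)\equiv P_i(0)$, so the decimated sequence $m\mapsto F(pm)\bmod p$ is a linear recurrence over $\overline{\mathbb{F}_p}$ whose coefficients are the reductions of the $P_i(0)$ and whose roots are a Frobenius permutation of the $\overline{\alpha_i}$. For every $p$ outside a fixed finite set this sequence is \emph{not} identically zero, because $\overline{P_{i_0}(0)}\neq 0$; consequently $\delta_p<1/p$ strictly, in contrast to the linear-recurrence case where $\delta_p=1/p$ and the union fills a set of density one. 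Granting the crucial estimate $\sum_p\delta_p<\infty$, the finite intersections $\bigcap_{p\le y}\mathcal{B}_p^{c}$ have densities bounded below by (essentially) $\prod_{p\le y}(1-\delta_p)\ge\prod_p(1-\delta_p)>0$, up to controlling the mild dependence introduced by common factors of the moduli $M_p$; combined with the tail estimate this forces $\mathbf{d}(\mathcal{A}_{F,I})>0$.

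I expect the main obstacle to be precisely the quantitative bound $\sum_p\delta_p<\infty$, that is, controlling the density of zeros of the decimated recurrence $F(pm)\bmod p$ uniformly in $p$. A crude ``at most $k$ roots'' count fails, since reducing the associated exponential polynomial modulo $x^{\pi_F(p)}-1$ can leave a polynomial of degree comparable to the period; and a nonzero recurrence over a finite field can have a positive \emph{proportion} of zeros in a period, which would not be summable after dividing by $p$. The genuine input must force the zero set to be sparse relative to $\pi_F(p)$ for almost all $p$. In the two-term (e.g.\ Lucas) case this is transparent: the zero density equals $1/\mathrm{ord}_p(\alpha_i/\alpha_j)$, and since $F$ is non-degenerate the ratio $\alpha_i/\alpha_j$ is not a root of unity, forcing $\mathrm{ord}_p(\alpha_i/\alpha_j)\gg_F\log p$ and hence $\delta_p\ll_F 1/(p\log p)$, which sums. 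The difficulty is to extend this to the general multi-term recurrence, where the zeros are governed jointly by the multiplicative orders of the ratios $\alpha_i/\alpha_j$ modulo $p$, and where non-degeneracy of $F$ must be leveraged to keep these orders large and the zero set correspondingly thin.
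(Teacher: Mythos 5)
Your architecture is the same as the paper's: pass to the complement, observe that the part of the complement coming from primes $p \le y$ is eventually periodic and hence has a density $\delta_y$, and reduce both the existence of the density and its positivity to a tail estimate showing that the primes $p > y$ contribute upper density $o(1)$ as $y \to \infty$. Your treatment of the dichotomy is also essentially the paper's: writing $F(n) = \sum_i f_i(n)\alpha_i^n$, the condition that $F/I$ is a linear recurrence is equivalent to $X \mid f_i$ for all $i$, and in that case the bounded denominators of $F(n)/n$ give $n \mid dF(n)$, hence $\gcd(F(n),n) \ge n/d$ and $\mathcal{A}_{F,I}$ is finite (this is the paper's Lemma~2.2, specialized to $G = X$). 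However, there is a genuine gap, and it sits exactly where you place it yourself: the summability $\sum_p \delta_p < \infty$, equivalently the tail estimate. You solve only the case $r = 2$ (via the multiplicative order of $\alpha_1/\alpha_2$, which is $\gg \log p$) and explicitly leave the general multi-term case as an open ``obstacle.'' Since this estimate is the entire analytic content of the theorem --- without it neither the existence of the density nor its positivity is established --- the proposal is an outline, not a proof.

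The missing input is supplied in the paper by the following circle of ideas. Define $D_F(x_1,\dots,x_r) := \det(\alpha_i^{x_j})$ and let $T_F(p)$ be the largest $T$ such that $p$ divides none of the norms $N_{\mathbb{K}}(D_F(0,x_2,\dots,x_r))$ with $1 \le x_2,\dots,x_r \le T$. Two facts are then proved. First (the paper's Lemma~2.3, from Alba Gonz\'alez--Luca--Pomerance--Shparlinski), $\#\{p \le x : T_F(p) < p^\gamma\} \ll_F x^{r\gamma}/(\gamma \log x)$, so $T_F(p) \ge p^\gamma$ for all but few primes; this is where non-degeneracy enters, guaranteeing the relevant determinants are nonzero so their norms are nonzero integers of controlled size. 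Second (Lemma~2.5, following Shparlinski), the number of $m \in [0,x]$ with $F(pm+\ell) \equiv 0 \pmod p$ is $O_r\!\left(x/T_F(p) + 1\right)$: if a window of $T_F(p)$ consecutive values of $m$ contained more than $C(r)+r$ zeros, then Schlickewei's uniform bound $C(r)$ on the number of solutions of exponential equations $\sum_i B_i \alpha_i^x = 0$ (a characteristic-zero statement, which is the tool you were missing) would allow one to select exponents $x_2,\dots,x_r < T_F(p)$ with $D_F(0,x_2,\dots,x_r) \neq 0$ while $\det(\alpha_i^{p x_j}) \equiv 0 \pmod \pi$, and taking norms contradicts the definition of $T_F(p)$. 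Partial summation then yields $\sum_{p > y} 1/(p\,T_F(p)) \ll_F y^{-1/(r+1)}$, which is precisely your desired summable bound; the same quantitative decay $y^{-\gamma}$ is what the paper plays off against the sieve bound $\prod_{p \le y}(1 - 1/p) \gg 1/\log y$ to force $\delta < 1$, i.e., $\mathbf{d}(\mathcal{A}_{F,I}) > 0$ (this also lets the paper sidestep the ``mild dependence'' between the moduli that your product heuristic $\prod_p(1-\delta_p)$ would need to control).
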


In the special case of the Fibonacci sequence, Sanna and Tron~\cite{ST001} gave a more precise result:

\begin{thm}
Assume $F$ is the Fibonacci sequence.
Then, for each positive integer $h$, the natural density of $\mathcal{A}_{F,I,h}$ exists and is given by
\begin{equation*}
\mathbf{d}(\mathcal{A}_{F,I,h}) = \sum_{d = 1}^\infty \frac{\mu(d)}{\operatorname{lcm}(dh, z(dh))} ,
\end{equation*}
where $\mu$ is the M\"obius function and $z(m)$ denotes the least positive integer $n$ such that $m$ divides $F(n)$.
Moreover, $\mathbf{d}(\mathcal{A}_{F,I,h}) > 0$ if and only if $\mathcal{A}_{F,I,h} \neq \varnothing$ if and only if $h = \gcd(\ell, F_\ell)$ with $\ell := \operatorname{lcm}(h, z(h))$.
\end{thm}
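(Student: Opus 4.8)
The plan is to reduce everything to the behaviour of the rank of apparition $z$, whose defining property $m \mid F(n) \iff z(m)\mid n$ turns divisibility of $\gcd(F(n),n)$ into a single congruence. Writing $L(d) := \operatorname{lcm}(d, z(d))$, one has $d \mid \gcd(F(n),n)$ precisely when $d\mid n$ and $z(d)\mid n$, i.e. when $L(d)\mid n$. First I would record the M\"obius identity $\sum_{h\mid d\mid g}\mu(d/h)=\mathbf 1[g=h]$, valid for every $g$ (the sum is empty, hence zero, unless $h\mid g$). Applying it with $g=\gcd(F(n),n)$, summing over $n\le x$, exchanging the order of summation, and reindexing $d=hd'$ gives the exact count
\begin{equation*}
\#\{n\le x : \gcd(F(n),n)=h\}=\sum_{d=1}^{\infty}\mu(d)\left\lfloor\frac{x}{L(dh)}\right\rfloor,
\end{equation*}
where the series is really finite, since $L(dh)\ge dh$ forces the floor to vanish once $dh>x$.

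To extract the density I would split this sum at a parameter $D$. For $d\le D$ I replace each floor by $x/L(dh)+O(1)$, giving the main term $x\sum_{d\le D}\mu(d)/L(dh)$ with error $O(D)$; for $d>D$ I bound the contribution trivially by $x\sum_{d>D}1/L(dh)$. Dividing by $x$, letting $x\to\infty$ and then $D\to\infty$, yields $\mathbf d(\mathcal A_{F,I,h})=\sum_{d=1}^{\infty}\mu(d)/L(dh)$, provided the series converges absolutely. This convergence is the main obstacle: the bound $L(dh)\ge dh$ only produces the divergent harmonic series, so one must show that $z(dh)$ contributes new primes often enough, i.e. that $\sum_d 1/\operatorname{lcm}(d,z(d))<\infty$. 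By the dual counting $\sum_{n\le x}\tau(\gcd(F(n),n))=\sum_d\lfloor x/L(d)\rfloor$, this is equivalent to the average number of divisors of $\gcd(F(n),n)$ being bounded. The key input is that integers with abnormally small rank of apparition are sparse: every prime $p$ with $z(p)\le y$ divides $\prod_{k\le y}F_k$, and a small $\operatorname{lcm}(n,z(n))$ forces $z(n)$ to share most of its primes with $n$, an event one can show is rare. I expect this estimate to be the technical heart of the argument.

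For the characterization of nonemptiness I would argue purely by divisibility. If $\gcd(F(n),n)=h$ then $h\mid n$ and $h\mid F(n)$, so $z(h)\mid n$ and hence $\ell:=\operatorname{lcm}(h,z(h))$ divides $n$; using $\ell\mid n\Rightarrow F_\ell\mid F(n)$ one gets $\gcd(\ell,F_\ell)\mid\gcd(n,F(n))=h$, while $h\mid\gcd(\ell,F_\ell)$ always holds (since $h\mid\ell$ and $z(h)\mid\ell$ give $h\mid F_\ell$). Hence $\gcd(\ell,F_\ell)=h$. Conversely, if $\gcd(\ell,F_\ell)=h$ then $\ell\in\mathcal A_{F,I,h}$, so the set is nonempty; this establishes $\mathcal A_{F,I,h}\neq\varnothing\iff h=\gcd(\ell,F_\ell)$.

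Finally, $\mathbf d(\mathcal A_{F,I,h})>0\Rightarrow\mathcal A_{F,I,h}\neq\varnothing$ is immediate, and for the converse I would show $\sum_d\mu(d)/L(dh)>0$ under $\gcd(\ell,F_\ell)=h$. Restricting to multiples of $\ell$ and using the multiplicativity of $z$ together with the Chinese Remainder Theorem, I would factor the density as $\tfrac{1}{\ell}\prod_p(1-\rho_p)$, where $\rho_p\in[0,1)$ is the local density of those $n$ carrying an excess factor $p$ beyond $h$. The point is that $\gcd(\ell,F_\ell)=h$ says exactly that the base point $n=\ell$ has no excess at any prime, i.e. $\rho_p<1$ for all $p$; combined with $\sum_p\rho_p<\infty$ (the same convergence estimate as above), the Euler product is positive, giving $\mathbf d(\mathcal A_{F,I,h})>0$. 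Thus both halves of the theorem rest on the single quantitative fact that the rank of apparition is seldom small, which I anticipate to be the crux of the proof.
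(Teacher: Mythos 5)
The paper you were given does not actually prove this statement: it is quoted there as background (Theorem 1.2), with the proof residing in Sanna and Tron \cite{ST001}, so your proposal has to be measured against that work. Your combinatorial skeleton is exactly the standard (and the published) one: the rank-of-apparition equivalence $d \mid \gcd(F(n),n) \Leftrightarrow \operatorname{lcm}(d,z(d)) \mid n$, M\"obius inversion giving the exact count $\sum_{d}\mu(d)\lfloor x/\operatorname{lcm}(dh,z(dh))\rfloor$, and truncation at a parameter $D$ to pass to the density. Likewise, your characterization of nonemptiness (that $\gcd(F(n),n)=h$ forces $\ell \mid n$, hence $\gcd(\ell,F_\ell)\mid h$ and $h \mid \gcd(\ell,F_\ell)$, with $n=\ell$ as witness in the converse) is complete and correct, as is the trivial implication that positive density forces nonemptiness.

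There remain two genuine gaps, of different severity. (i) The convergence of $\sum_d 1/\operatorname{lcm}(d,z(d))$, which you rightly identify as the crux, is never proved: you offer only the heuristic that integers with small rank of apparition are rare. This is a real lemma --- it is the key technical lemma of \cite{ST001}, proved there via quantitative counting of integers with small $z$ --- and everything else depends on it: the existence of the density, your tail estimate for $d>D$, and the bound $\sum_p \rho_p<\infty$ you invoke later. Flagging it honestly does not close the gap. (ii) More seriously, the positivity step is wrong as written. You propose to ``factor the density as $\tfrac1\ell\prod_p(1-\rho_p)$'' using ``the multiplicativity of $z$ together with the Chinese Remainder Theorem''. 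But $z$ is not multiplicative (e.g.\ $z(3)=4$ and $z(7)=8$, yet $z(21)=8$ since $F_8=21$; the correct rule is $z(\operatorname{lcm}(a,b))=\operatorname{lcm}(z(a),z(b))$), and the bad events at distinct primes are not independent, because the ranks $z(p)$ share prime factors: for $h=1$ the bad event at $p=2$ is $6\mid n$, at $p=7$ it is $56\mid n$, and their intersection has density $1/\operatorname{lcm}(6,56)=1/168\neq 1/336$. So the density is \emph{not} this Euler product, and CRT does not apply since the moduli are not coprime. What can be salvaged is only a product \emph{lower} bound: writing $n=\ell m$, the bad condition at $p$ becomes $b_p\mid m$ with $b_p=\operatorname{lcm}(ph,z(ph),\ell)/\ell$, and the Behrend (Heilbronn--Rohrbach) inequality for sets of multiples gives that the density of $m$ avoiding all $b_p$ is at least $\prod_p(1-1/b_p)$; combining this with the tail bound from (i) to truncate to finitely many primes, and with your (correct) observations that nonemptiness forces each $b_p>1$ and that $\sum_p 1/b_p<\infty$, would yield positivity. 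As it stands, though, the central factorization step is false, not merely unproven, and the convergence lemma carrying the whole theorem is missing.
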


Also, they pointed out that their result can be extended to any non-degenerate Lucas sequence $F$ with $\gcd(a_1, a_2) = 1$; while Kim~\cite{Kim} gave an analog result for elliptic divisibility sequences.

Trying to extend the previous result to $\mathcal{A}_{F,G,h}$ for two arbitrary integral linear recurrences is quite tempting.
However, already establishing if the set $\mathcal{A}_{F,G}$ is infinite seems too difficult for the current methods.
Indeed, the following conjecture of Ailon and Rudnick~\cite{MR2046966} is open.

\begin{con}
Let $a, b$ be two multiplicatively independent non-zero integers with $\gcd(a - 1, b - 1) = 1$.
Then, for the linear recurrences $F(n) = a^n - 1$ and $G(n) = b^n - 1$, the set $\mathcal{A}_{F, G}$ is infinite.
\end{con}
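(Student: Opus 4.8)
The natural plan is to reformulate the coprimality condition as a sieve problem over the integers $n$ and then try to show that a positive proportion of $n$ survive. For a prime $p \nmid ab$ one has $p \mid a^n - 1$ precisely when $\operatorname{ord}_p(a) \mid n$, and likewise $p \mid b^n - 1$ precisely when $\operatorname{ord}_p(b) \mid n$; primes dividing $ab$ never contribute. Hence $p \mid \gcd(a^n - 1, b^n - 1)$ exactly when $m_p \mid n$, where $m_p := \operatorname{lcm}(\operatorname{ord}_p(a), \operatorname{ord}_p(b))$, so that
\[ n \in \mathcal{A}_{F,G} \quad\Longleftrightarrow\quad m_p \nmid n \ \text{ for every prime } p . \]
A prime $p$ with $m_p = 1$ would satisfy $p \mid \gcd(a-1,b-1) = 1$, which is impossible, so $m_p \ge 2$ for all $p$; in particular $n = 1 \in \mathcal{A}_{F,G}$, recovering the hypothesis, and the task becomes producing infinitely many $n$ avoiding divisibility by every modulus $m_p$.

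First I would attack this with a Brun- or Selberg-type sieve. Heuristically, the number of $n \le x$ with $m_p \nmid n$ for all $p$ should be asymptotic to $x \prod_p (1 - 1/m_p)$, and this main term is bounded below by a positive multiple of $x$ as soon as the singular product $\prod_p (1 - 1/m_p)$ converges to a nonzero limit, equivalently $\sum_p 1/m_p < \infty$. Such a lower bound for the count would give positive lower density for $\mathcal{A}_{F,G}$ and, a fortiori, infinitude.

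To establish convergence of $\sum_p 1/m_p$ I would exploit the multiplicative independence of $a$ and $b$, which should force $m_p$ to be large outside a sparse set of primes. The natural quantitative input is the theorem of Bugeaud, Corvaja, and Zannier~\cite{MR1953049}, namely $\gcd(a^m - 1, b^m - 1) \le \exp(\epsilon m)$ for every fixed $\epsilon > 0$ and all large $m$. Since the primes $p$ with $m_p \mid m$ are exactly those dividing $\gcd(a^m - 1, b^m - 1)$, this bounds their number by $\omega(\gcd(a^m - 1, b^m - 1)) \le \epsilon m$ for $m$ large, which one would feed into a dyadic summation of $\sum_p 1/m_p = \sum_m |\{p : m_p = m\}| / m$.

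The hard part, and the reason the statement is a conjecture rather than a theorem, is precisely this last step together with the control of the sieve error terms. The Bugeaud--Corvaja--Zannier bound is uniform in $m$ but is not summable as it stands: it does not exclude a conspiracy in which many primes share a common, relatively small value of $m_p$, so the crude estimate $\sum_m |\{p : m_p = m\}|/m \le \sum_m \epsilon$ diverges. Equivalently, sieving by the moduli $m_p$ produces a sieve of effectively unbounded dimension whose error terms — counting $n \le x$ in progressions to the highly composite moduli $\operatorname{lcm}$ of several $m_p$ — lie beyond current technology. What is really missing is a joint equidistribution statement for the pair $(\operatorname{ord}_p(a), \operatorname{ord}_p(b))$ as $p$ varies, the arithmetic analogue of the Zariski-density and height arguments that allowed Ailon and Rudnick~\cite{MR2046966} to settle the function-field case, and it is this joint control of multiplicative orders, rather than any single ingredient, that remains out of reach.
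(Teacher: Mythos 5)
The statement you were asked to prove is not a theorem of the paper at all: it is the Ailon--Rudnick conjecture~\cite{MR2046966}, which the authors quote explicitly \emph{as an open problem}, in order to motivate their restriction to the case where $G$ is a polynomial. There is therefore no proof in the paper to compare against, and your proposal --- candidly presented as a plan ending in an acknowledged gap --- rightly stops short of claiming a proof. Your reduction is correct as far as it goes: for $p \nmid ab$ one has $p \mid \gcd(a^n-1, b^n-1)$ if and only if $m_p \mid n$ with $m_p := \operatorname{lcm}(\operatorname{ord}_p(a), \operatorname{ord}_p(b))$, primes dividing $ab$ are irrelevant, and $m_p \geq 2$ follows from $\gcd(a-1,b-1)=1$.

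However, one step of your plan is not merely ``hard'' but provably impossible, and you should be aware of it: the convergence of $\sum_p 1/m_p$, which you propose as the sufficient condition making the sieve main term positive, \emph{always fails}. By Fermat's little theorem, $\operatorname{ord}_p(a)$ and $\operatorname{ord}_p(b)$ both divide $p-1$ for every $p \nmid ab$, hence $m_p \mid p - 1$ and $1/m_p \geq 1/(p-1)$; summing over primes, the series diverges like $\sum_p 1/p$, with no conspiracy of small orders required. Consequently the singular product $\prod_p (1 - 1/m_p)$ is $0$ unconditionally, and no joint equidistribution statement for $(\operatorname{ord}_p(a), \operatorname{ord}_p(b))$, nor the Bugeaud--Corvaja--Zannier bound~\cite{MR1953049}, can rescue that criterion. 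The product over primes is in any case the wrong object: distinct primes frequently share the same modulus (every prime $p$ with $m_p = m$ imposes the single condition $m \nmid n$), so the correct formulation is a set-of-multiples problem for the set $S := \{ m_p : p \nmid ab \}$ of \emph{distinct} moduli, and $\mathcal{A}_{F,G}$ is infinite precisely when infinitely many $n$ escape every multiple of every element of $S$. What is genuinely unknown is the structure of $S$ --- for instance whether $S$ eventually contains all sufficiently large integers, which would make $\mathcal{A}_{F,G}$ finite --- and this, rather than sieve error terms, is where the obstruction lies. Note finally that the conjecture asks only for infinitude, whereas your sieve targets positive lower density, a possibly strictly stronger conclusion. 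Your closing diagnosis (that the torsion-point and height arguments behind the function-field theorem of Ailon and Rudnick have no arithmetic analogue) is sound; but the specific analytic criterion you set up should be recast over distinct moduli before one can even state correctly what would need to be proved.
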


In this paper, we focus on the special case in which the linear recurrence $G$ is an integer-valued polynomial splitting over the rationals.
Our main result is the following:

\begin{thm}\label{thm:main}
Let $F$ be an integral linear recurrence, $G$ be an integer-valued polynomial with all roots in $\mathbb{Q}$, and $h$ be a positive integer.
Then, the set $\mathcal{A}_{F,G,h}$ has a natural density.
Moreover, if $F$ is non-degenerate and $G$ has no fixed divisors (and $h = 1$), then $\mathbf{d}(\mathcal{A}_{F,G}) = 0$ if and only if $\mathcal{A}_{F,G}$ is finite.
\end{thm}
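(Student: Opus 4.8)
The plan is to reduce the condition $\gcd(F(n),G(n))=h$ to congruence conditions by Möbius inversion, prove existence of the density by controlling an infinite divisor sum, and then read off the dichotomy from an Euler-type product. For every integer $m\geq 1$ set
\[
\rho(m):=\mathbf{d}\big(\{n\in\mathbb{N}:\ m\mid F(n)\ \text{and}\ m\mid G(n)\}\big),
\]
which I claim exists. Indeed $F(n)\bmod m$ is eventually periodic in $n$, and since $G$ is an integer-valued polynomial, $G(n)\bmod m$ is periodic in $n$; hence the set in question is, off an initial segment, a finite union of residue classes, so $\rho(m)$ exists, and by the Chinese Remainder Theorem applied to the two combined periods $\rho$ is multiplicative on coprime arguments. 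The governing pointwise identity is
\[
\mathbf{1}_{\gcd(F(n),G(n))=h}=\sum_{d\geq 1}\mu(d)\,\mathbf{1}_{dh\mid F(n)}\,\mathbf{1}_{dh\mid G(n)},
\]
checked by summing $\mu$ over the divisors of $\gcd(F(n),G(n))/h$ when $h\mid\gcd$ and noting both sides vanish otherwise. Averaging over $n\leq N$ and formally exchanging the two limits suggests $\mathbf{d}(\mathcal{A}_{F,G,h})=\sum_{d\geq 1}\mu(d)\,\rho(dh)$.

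The substance of the first assertion is to justify this interchange, i.e.\ to show that the tail $\sum_{d>D}\mu^2(d)\cdot\tfrac1N\#\{n\leq N:\ dh\mid F(n),\ dh\mid G(n)\}$ is small uniformly in $N$ as $D\to\infty$. Here the hypothesis that $G$ splits over $\mathbb{Q}$ is decisive: writing $G(n)=C\prod_i(q_in-p_i)$ with $\gcd(p_i,q_i)=1$, a prime $p$ divides $G(n)$ only for $n$ lying in at most $\deg G$ residue classes modulo $p$, so a large modulus $dh$ can divide $G(n)$ only for rare $n$. Combining this with the periodicity of $F$ modulo $dh$ controls the above average by a quantity comparable to $\rho(dh)$, and the splitting of $G$ yields a bound of the shape $\rho(p)\ll \delta_F(p)\deg(G)/p$, where $\delta_F(p)$ is the density of $\{n:\ p\mid F(n)\}$. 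Since $F$ is \emph{not} assumed non-degenerate in this part, I would first partition $n$ into residue classes modulo a fixed modulus on which the offending ratios $\alpha_i/\alpha_j$ become trivial, reducing to finitely many non-degenerate recurrences; on each, standard bounds on ranks of apparition give convergence of $\sum_p\delta_F(p)/p$, hence absolute convergence of $\sum_d\mu^2(d)\rho(dh)$. This legitimizes the interchange and proves that $\mathcal{A}_{F,G,h}$ has a natural density.

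For the second assertion take $h=1$. Multiplicativity of $\rho$ together with the absolute convergence just obtained gives
\[
\mathbf{d}(\mathcal{A}_{F,G})=\sum_{d\geq 1}\mu(d)\,\rho(d)=\prod_p\big(1-\rho(p)\big).
\]
Because $G$ has no fixed divisor, for each prime $p$ there is a residue with $p\nmid G(n)$, so $\rho(p)\leq\delta_G(p)<1$, where $\delta_G(p)=\nu_G(p)/p$ and $\nu_G(p)\leq\deg G$ counts the roots of $G$ modulo $p$; in particular $\rho(p)\to 0$, and no factor equals $0$. Consequently the product is positive unless $\sum_p\rho(p)=\infty$, in which case it vanishes. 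Since a finite set trivially has density $0$, it remains to prove the one nontrivial implication: divergence of $\sum_p\rho(p)$ (equivalently $\mathbf{d}(\mathcal{A}_{F,G})=0$) forces $\mathcal{A}_{F,G}$ to be \emph{finite}.

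\textbf{The main obstacle is exactly this last step}, namely converting the analytic statement ``$\sum_p\rho(p)=\infty$'' into the algebraic statement that a linear factor of $G$ divides $F$ recurrently. The intended mechanism is that $\rho(p)\gg 1/p$ along a sum-divergent set of primes can occur only if, for some factor $q_in-p_i$, one has $p\mid F(n)$ for \emph{all} $n$ in the progression $n\equiv p_i q_i^{-1}\ (\mathrm{mod}\ p)$, which is a genuine divisibility $q_in-p_i\mid F(n)$ that then persists for all large $n$ and makes $\gcd(F(n),G(n))>1$ eventually. To organize this I would use that coprimality is multiplicative, so that $n\in\mathcal{A}_{F,G}$ forces $\gcd(F(n),q_in-p_i)=1$ for every $i$, and relate each such linear condition to the identity-sequence case $\mathcal{A}_{F,I}$ treated in the quoted theorem, whose dichotomy (finite versus positive density, according to whether the relevant quotient is a linear recurrence) provides the needed structural trichotomy. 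The delicate points are to show that the quoted linear analysis transfers to the shifted-dilated forms $q_in-p_i$, and to control the interaction between distinct factors so that the finiteness of the intersection $\bigcap_i\{n:\gcd(F(n),q_in-p_i)=1\}$ is genuinely governed by the global product $\prod_p(1-\rho(p))$ rather than by accidental cancellation among factors; this is where I expect the bulk of the work to lie.
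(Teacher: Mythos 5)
Your proposal contains genuine gaps: two of its load-bearing claims are false as stated, and the implication you yourself identify as the crux is left unproven.

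First, $\rho$ is \emph{not} multiplicative, so the Euler product $\sum_{d\geq 1}\mu(d)\rho(d)=\prod_p\bigl(1-\rho(p)\bigr)$ on which your entire second part rests is invalid. The set $\{n:m\mid F(n)\}$ is a union of residue classes modulo the period of $F$ mod $m$, and these periods for coprime moduli need not be coprime, so the Chinese Remainder Theorem does not decouple them. Concretely, let $F$ be the Fibonacci sequence and $G(n)=n$ (an admissible $G$ for this theorem); then $\rho(m)=1/\operatorname{lcm}(m,z(m))$, where $z(m)$ is the least $n$ with $m\mid F(n)$, so $\rho(2)=1/6$ and $\rho(3)=1/12$, but $\rho(6)=1/\operatorname{lcm}(6,12)=1/12\neq\rho(2)\rho(3)$. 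This is exactly why the Sanna--Tron density quoted in the introduction is a M\"obius \emph{sum} and not a product. Moreover, your key estimate $\rho(p)\ll\delta_F(p)\deg(G)/p$, which also underlies the absolute convergence needed for the existence part, is false in general: for the non-degenerate recurrence $F(n)=n2^n$ and $G(n)=n$ one has $\rho(p)=1/p$ for every odd prime $p$, whereas $\delta_F(p)\deg(G)/p=1/p^2$. The obstruction is a common factor between $G$ and the polynomial coefficients $f_i$ in the power-sum representation of $F$; any correct proof must isolate this case, and the paper does so in Lemma~\ref{lem:commonfactor}, where it produces precisely the \emph{finite} branch of the dichotomy.

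Even after excluding common factors, the decoupling you want cannot come from ``standard bounds on ranks of apparition'': for a general non-degenerate recurrence there is no rank of apparition (the zero set of $F$ mod $p$ need not be a union of boundedly many progressions), and when $\psi_F$ has repeated roots the period of $F$ mod $p$ is divisible by $p$ (as in the example $n2^n$ above), so periodicity alone can never separate $F$ from the residue classes mod $p$ cut out by $G$. Controlling this correlation is the technical core of the paper: it uses Schlickewei's zero-multiplicity theorem, encoded in the quantities $T_F(p)$ and Lemmas~\ref{lem:PFgamma}, \ref{lem:pTFp} and~\ref{lem:Fpml}, together with the B\'ezout argument of Lemma~\ref{lem:fiellnonzero}; note also that Lemma~\ref{lem:Fpml} is a count of solutions in the progression $n=pm+\ell$ uniform in $x$, which is what an interchange of limits actually requires, not a statement about densities. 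Finally, the implication ``$\mathbf{d}(\mathcal{A}_{F,G})=0\Rightarrow\mathcal{A}_{F,G}$ finite'', which you defer as ``the bulk of the work'', is never obtained in the paper by analyzing $\sum_p\rho(p)$; the paper avoids computing the density altogether. It splits $\mathcal{C}_{F,G}=\mathbb{N}\setminus\mathcal{A}_{F,G}$ at a prime threshold $y$, shows $\#\mathcal{C}_{F,G,y}^+(x)\ll_{F,G}x/y^\gamma+x/\log x$ uniformly (the hypothesis that $G$ splits over $\mathbb{Q}$ enters only to guarantee $p\ll_G x$), and, when $G$ has no fixed divisor and no common factor with the $f_i$, deduces $\delta<1$ by comparing the sieve lower bound $c_1/(\log y)^{\deg G}$ with the loss $c_2/y^\gamma$. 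Without replacements for these ingredients, your outline establishes neither the existence of the density nor the dichotomy.
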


It would be interesting to prove Theorem~\ref{thm:main} for any integer-valued polynomial $G$, dropping the hypothesis that all the roots of $G$ must be rational or eliminating the presence of a fixed divisor.
However, doing so presents some difficulties, which we will highlight in the last section.

\subsection*{Notation}

Throughout, the letter $p$ will always denote a prime number, and we write $\nu_p$ for the $p$-adic valuation.
For a set of positive integers $\mathcal{S}$, we put $\mathcal{S}(x):=\mathcal{S}\cap [1,x]$ for all $x\ge 1$, and we recall that the natural density $\mathbf{d}(\mathcal{S})$ of $\mathcal{S}$ is defined as the limit of the ratio $\#\mathcal{S}(x) / x$ as $x \to +\infty$, whenever this exists.
We employ the Landau--Bachmann ``Big Oh'' and ``little oh'' notations $O$ and $o$, as well as the associated Vinogradov symbols $\ll$ and $\gg$, with their usual meanings. 
Any dependence of the implied constants is explicitly stated or indicated with subscripts.

\section{Preliminary results}

In this section, we collect some definitions and preliminary results needed in the later proofs.
Let $F$ be a non-degenerate integral linear recurrence satisfying \eqref{equ:linear} and let $\psi_F$ be its characteristic polynomial.
To avoid trivialities, we assume that $F$ is not identically zero.
Moreover, let $\mathbb{K}$ be the splitting field of $\psi_{F}$ over $\mathbb{Q}$, and let $\alpha_1, \ldots, \alpha_r \in \mathbb{K}$ be all the distinct roots of $\psi_F$.

It is well known that there exist non-zero polynomials $f_1, \ldots, f_r \in \mathbb{K}[X]$ such that 
\begin{equation}\label{equ:genpowsum}
F(n) = \sum_{i = 1}^r f_i(n)\,\alpha_i^n ,
\end{equation}
for all integers $n \geq 0$.
In fact, the expression \eqref{equ:genpowsum} is known as the \emph{generalized power sum} representation of $F$ and is unique (assuming the roots $\alpha_1, \ldots, \alpha_r$ are distinct, and up to the order of the addends).

Let $G$ be an integer-valued polynomial, and let $h$ be a positive integer.
We begin with two basic lemmas about $\mathcal{A}_{F,G,h}$.

\begin{lem}\label{lem:disjointunion}
We have that $\mathcal{A}_{F,G,h}$ is the disjoint union of a finite set and finitely many sets of the form $a\mathcal{A}_{\widetilde{F}, \widetilde{G}} + b$, where $a,b$ are positive integers, $\widetilde{F}$ is a non-degenerate integral linear recurrence, and $\widetilde{G}$ is an integer-valued polynomial.
\end{lem}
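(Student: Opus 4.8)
The plan is to reduce the general statement to the coprimality case $\gcd = 1$ together with non-degeneracy of the recurrence, by splitting $\mathbb{N}$ into suitable arithmetic progressions. First I would remove the degeneracy of $F$. It is well known (see, e.g., \cite{MR1990179}) that there exists a positive integer $M$ such that, for every $b \in \{0, \dots, M-1\}$, the subsequence $F_b(n') := F(Mn' + b)$ is either identically zero or a non-degenerate integral linear recurrence; indeed it suffices to take $M$ equal to the least common multiple of the orders of those ratios $\alpha_i / \alpha_j$ that are roots of unity, since then $\alpha_i^M = \alpha_j^M$ holds exactly when $\alpha_i / \alpha_j$ is a root of unity, so the distinct values among $\alpha_1^M, \dots, \alpha_r^M$ are the roots of a non-degenerate recurrence. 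Writing $n = Mn' + b$ decomposes $\mathcal{A}_{F,G,h}$ as the disjoint union over $b$ of the sets $\{Mn' + b : \gcd(F_b(n'), G_b(n')) = h\}$, where $G_b(n') := G(Mn' + b)$ is again an integer-valued polynomial. For the indices $b$ with $F_b \equiv 0$ the condition reads $|G_b(n')| = h$, which has only finitely many solutions unless $G_b$ is the constant $\pm h$; in the former case the contribution is absorbed into the finite set, and in the latter the whole progression equals $M\,\mathcal{A}_{\widehat F, \widehat G} + b$ with $\widehat F = \widehat G \equiv 1$ (a non-degenerate recurrence and an integer-valued polynomial), so these indices cause no trouble.

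For the remaining indices, with $F_b$ non-degenerate, the next step is to pin down the divisibility by $h$. Both $F_b$ and $G_b$ are eventually periodic modulo $h$, so there are a common period $T$ and a threshold $n_0$ such that, for all $n' \ge n_0$, the residues $F_b(n') \bmod h$ and $G_b(n') \bmod h$ depend only on $n' \bmod T$. The finitely many $n'$ with $n' < n_0$ contribute only finitely many integers $n$, which I collect into the finite set. For $n' \ge n_0$, belonging to $\mathcal{A}_{F,G,h}$ forces $h \mid F_b(n')$ and $h \mid G_b(n')$ (since $h = \gcd$ divides both terms), and whether this holds depends only on $n' \bmod T$. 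Let $R_b$ be the set of residues $c \bmod T$ for which it holds; residue classes outside $R_b$ contribute nothing.

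Finally, on each kept class $n' \equiv c \pmod{T}$ with $c \in R_b$, I divide through by $h$. Put $\widehat F(m) := F_b(Tm + c)/h = F(MTm + (Mc + b))/h$ and $\widehat G(m) := G_b(Tm + c)/h$. Since $h$ divides every term, $\widehat F$ is an integral sequence; it satisfies the same linear recurrence relation as the subsequence $m \mapsto F_b(Tm + c)$ (scaling by $1/h$ preserves the relation and the characteristic roots $\alpha_i^{MT}$), hence is a non-degenerate integral linear recurrence, while $\widehat G$ is an integer-valued polynomial. By construction, $\gcd(F(n), G(n)) = h$ is equivalent to $\gcd(\widehat F(m), \widehat G(m)) = 1$, so with $n = MTm + (Mc + b)$ the corresponding contribution is exactly $(MT)\,\mathcal{A}_{\widehat F, \widehat G} + (Mc + b)$, one of the allowed pieces. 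Ranging over $b$ and $c \in R_b$ gives finitely many such pieces, and they are pairwise disjoint because the residues $Mc + b$ are pairwise distinct modulo $MT$ as $(b, c)$ varies.

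The routine parts are the two splittings and the verification that dividing by $h$ preserves both the recurrence structure and non-degeneracy. The points requiring care are that reduction modulo $h$ yields only eventual, not necessarily pure, periodicity when $\gcd(a_k, h) > 1$, which is why a pre-period $n_0$ and its associated finite set are unavoidable, and the degenerate possibility $F_b \equiv 0$; both are dealt with by pushing the exceptional integers into the finite set.
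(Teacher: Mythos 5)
Your proposal is correct and follows essentially the same route as the paper's proof: split $\mathbb{N}$ into arithmetic progressions so that each subsequence of $F$ is non-degenerate or identically zero, use the eventual periodicity of $F$ and $G$ modulo $h$ to isolate the progressions on which $h$ divides both values (pushing the pre-period into the finite set), and then divide by $h$ to reduce to sets $\mathcal{A}_{\widetilde{F},\widetilde{G}}$ with $\gcd = 1$. In fact you treat two points more explicitly than the paper does --- the identically-zero subsequences (via $\widehat{F}=\widehat{G}\equiv 1$) and the verification that non-degeneracy survives passage to a subprogression and scaling by $1/h$ --- so no gaps remain.
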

\begin{proof}
First, it is well known and easy to prove that there exists a positive integer $c$ such that, setting $F_j(m) := F(cm + j)$ for all non-negative integers $m$ and $j < c$, each $F_j$ is an integral linear recurrence which is non-degenerate or identically zero.
Then, $\mathcal{A}_{F,G,h}$ is the disjoint union of the sets $\mathcal{A}_{F_j, G_j,h}$, where $G_j(m) := G_j(cm + j)$.
Thus, without loss of generality, we can assume that $F$ is non-degenerate.

Clearly, if $n \in \mathcal{A}_{F,G,h}$ then $h$ divides both $F(n)$ and $G(n)$.
Since integral linear recurrences (and, in particular, integer-valued polynomials) are definitively periodic modulo any positive integer, there exist a finite set $\mathcal{E}$ and positive integers $a, b_1, \ldots, b_t$ such that $h \mid \gcd(F(n),G(n))$ if and only if $n \in \mathcal{E}$ or $n = a m + b_i$, for some positive integer $m$ and some $i \in \{1,\dots,t\}$.
Moreover, if $n = am + b_i$, for some integers $m \geq 1$ and $i \in \{1,\dots,t\}$, then $n \in \mathcal{A}_{F,G,h}$ if and only if $m \in \mathcal{A}_{\widetilde{F}_j, \widetilde{G}_j}$, where $\widetilde{F}_i(\ell) := F(a\ell + b_i) / h$ and $\widetilde{G}_i(\ell) := G(a\ell + b_i) / h$ for all integers $\ell \geq 0$.
In~particular, $\widetilde{F}_i$ is a non-degenerate integral linear recurrence and $\widetilde{G}_i$ is an integer-valued polynomial.
So we have proved that $\mathcal{A}_{F,G,h}$ is the disjoint union of the finite set $\mathcal{E}$ and $a \mathcal{A}_{\widetilde{F}_i, \widetilde{G}_i} + b_i$, for $i=1,\dots,t$, as desired.
\end{proof}

\begin{lem}\label{lem:commonfactor}
If $G, f_1, \ldots, f_r$ have a non-trivial common factor, then $\mathcal{A}_{F,G}$ is finite.
\end{lem}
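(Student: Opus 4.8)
The plan is to show that $\gcd(F(n),G(n)) > 1$ for all sufficiently large $n$, which forces $\mathcal{A}_{F,G}$ to be finite. Let $\beta \in \overline{\mathbb{Q}}$ be a root of the hypothesised non-trivial common factor, so that $\beta$ is a common root of $G$ and of every $f_i$, and set $\mathbb{L} := \mathbb{K}(\beta)$, a number field with ring of integers $\mathcal{O}_\mathbb{L}$; write $v_\mathfrak{p}$ for the $\mathfrak{p}$-adic valuation attached to a prime ideal $\mathfrak{p}$. Since $X-\beta$ divides $G$ and each $f_i$ in $\mathbb{L}[X]$, I would write $G(X) = (X-\beta)\widetilde{G}(X)$ and $f_i(X) = (X-\beta)g_i(X)$ with $\widetilde{G}, g_i \in \mathbb{L}[X]$, so that from \eqref{equ:genpowsum}
\[
F(n) = (n-\beta)\,\widetilde{F}(n), \quad \widetilde{F}(n) := \sum_{i=1}^r g_i(n)\,\alpha_i^n, \quad G(n) = (n-\beta)\,\widetilde{G}(n),
\]
for all integers $n \ge 0$. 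Because $\psi_F$ is monic with integer coefficients, each $\alpha_i$ lies in $\mathcal{O}_\mathbb{L}$; hence, clearing the denominators of the finitely many coefficients of the $g_i$ and of $\widetilde{G}$, there is a fixed positive integer $c$ such that $c\,\widetilde{F}(n), c\,\widetilde{G}(n) \in \mathcal{O}_\mathbb{L}$ for every integer $n$.

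Next comes the local heart of the argument. I claim that for every large $n$ there is a prime ideal $\mathfrak{p}$ of $\mathcal{O}_\mathbb{L}$ with $v_\mathfrak{p}(n-\beta) > v_\mathfrak{p}(c)$. Granting this, let $\ell$ be the rational prime below $\mathfrak{p}$. By the choice of $c$ one has $v_\mathfrak{p}(\widetilde{F}(n)) \ge -v_\mathfrak{p}(c)$ and $v_\mathfrak{p}(\widetilde{G}(n)) \ge -v_\mathfrak{p}(c)$, so that $v_\mathfrak{p}(F(n)) = v_\mathfrak{p}(n-\beta) + v_\mathfrak{p}(\widetilde{F}(n)) \ge 1$ and likewise $v_\mathfrak{p}(G(n)) \ge 1$. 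Thus the rational integers $F(n)$ and $G(n)$ both lie in $\mathfrak{p}$, hence in $\mathfrak{p} \cap \mathbb{Z} = \ell\mathbb{Z}$; therefore $\ell \mid \gcd(F(n),G(n))$ and $\gcd(F(n),G(n)) > 1$, so $n \notin \mathcal{A}_{F,G}$.

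It remains to establish the claim, which I would do by a norm comparison. Suppose that, for some $n$, $v_\mathfrak{p}(n-\beta) \le v_\mathfrak{p}(c)$ for every prime ideal $\mathfrak{p}$. Writing the fractional ideal $(n-\beta) = \mathfrak{a}\,\mathfrak{b}^{-1}$ with coprime integral ideals $\mathfrak{a},\mathfrak{b}$, this hypothesis gives $\mathfrak{a} \mid (c)$, whence $\mathbf{N}\mathfrak{a} \le \mathbf{N}\bigl((c)\bigr) = c^{[\mathbb{L}:\mathbb{Q}]}$ is bounded independently of $n$. On the other hand, multiplicativity of the ideal norm gives $\mathbf{N}\mathfrak{a} = \bigl|N_{\mathbb{L}/\mathbb{Q}}(n-\beta)\bigr|\cdot \mathbf{N}\mathfrak{b} \ge \prod_\sigma |n-\sigma\beta|$, where $\sigma$ runs over the embeddings of $\mathbb{L}$, and this product tends to infinity with $n$. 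Hence the bad situation occurs for only finitely many $n$, which proves the claim for all large $n$ and completes the argument that $\mathcal{A}_{F,G}$ is finite.

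The step I expect to be most delicate is the uniform control of denominators: a priori $\widetilde{F}(n)$ and $\widetilde{G}(n)$ need not be algebraic integers, so one cannot simply assert that a prime dividing $n-\beta$ divides $F(n)$ and $G(n)$. The remedy is to absorb every denominator into the single fixed constant $c$ and to demand the stronger divisibility $v_\mathfrak{p}(n-\beta) > v_\mathfrak{p}(c)$, which is precisely what the norm lower bound $\prod_\sigma |n-\sigma\beta| \to \infty$ supplies. The technical point to verify carefully is that $c$ can be chosen independently of $n$, and here it is crucial that the $\alpha_i$ are algebraic integers, so that the powers $\alpha_i^n$ never introduce new denominators as $n$ grows.
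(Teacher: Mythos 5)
Your proof is correct, but it takes a genuinely different route from the paper's. The paper descends to $\mathbb{Q}$: applying each $\sigma \in \operatorname{Gal}(\mathbb{L}/\mathbb{Q})$ to \eqref{equ:genpowsum} and invoking the \emph{uniqueness} of the generalized power sum representation, it shows that every conjugate of $\beta$ is a root of each $f_i$, so that the minimal polynomial $g$ of $\beta$ over $\mathbb{Q}$ divides $G$ and all the $f_i$; then, with $B$ a positive integer clearing denominators, both $BF(n)/g(n)$ and $BG(n)/g(n)$ are rational integers, and $\gcd(F(n),G(n))=1$ forces $B/g(n) \in \mathbb{Z}$ (e.g.\ via a B\'ezout relation $uF(n)+vG(n)=1$), which can happen only finitely often since $|g(n)| \to \infty$. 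You avoid this descent entirely: you keep the single root $\beta$, work in $\mathbb{L} = \mathbb{K}(\beta)$, and replace the global divisibility $g(n) \mid B$ by local data, producing for each large $n$ a prime ideal $\mathfrak{p}$ with $v_{\mathfrak{p}}(n-\beta) > v_{\mathfrak{p}}(c)$, whose underlying rational prime must then divide $\gcd(F(n),G(n))$. What your route buys is that you never need the Galois argument or the uniqueness of the power-sum representation --- dividing by the single factor $n-\beta$ rather than by all of $g(n)$ is exactly what your prime-ideal bookkeeping compensates for; the cost is heavier machinery (fractional ideals, valuations, ideal norms) where the paper needs only the elementary growth $|g(n)| \to \infty$. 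At bottom the two arguments rest on the same quantitative fact, since $|g(n)|$ is, up to a bounded factor, the norm $\prod_\sigma |n-\sigma\beta|$ that drives your contradiction. One small point of hygiene: state the convention $v_\mathfrak{p}(0)=+\infty$, so that the inequality $v_\mathfrak{p}(F(n)) \ge 1$ also covers the (a priori possible) integers $n$ with $\widetilde{F}(n)=0$ or $G(n)=0$; with that convention your argument needs no further change.
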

\begin{proof}
Suppose $X - \beta$ divides each of $G, f_1, \ldots, f_r$, for some algebraic number $\beta$.
Let $g \in \mathbb{Q}[X]$ be the minimal polynomial of $\beta$ over $\mathbb{Q}$.
Clearly, $g$ divides $G$.
Also, if $\mathbb{L}$ is the splitting field of $gGf_1\cdots f_r$, then for each $\sigma \in \operatorname{Gal}(\mathbb{L} / \mathbb{Q})$ we have
\begin{equation*}
F(n) = \sigma(F(n)) = \sum_{i = 1}^r (\sigma f_i)(n) (\sigma(\alpha_i))^n ,
\end{equation*}
for all positive integers $n$.
In particular, $\sigma(\beta)$ is a root of each $\sigma f_i$, since $\beta$ is a root of each $f_i$.  
Therefore, by the uniqueness of the generalized power sum expression of a linear recurrence, we get that $\sigma(\beta)$ is a root of each $f_i$ and, as a consequence, $g$ divides each $f_i$.
Now let $B$ be a positive integer such that all the polynomials $BG / g, Bf_1 / g, \ldots, Bf_r / g$ have coefficients which are algebraic integers.
Then, it follows easily that $B F(n) / g(n)$ and $B G(n) / g(n)$ are both integers, for all positive integers $n$. (Note that $g(n) \neq 0$ since $g$ is irreducible in $\mathbb{Q}[X]$.)
Hence, $n \in \mathcal{A}_{F,G}$ implies $g(n) \mid B$, which is possible only for finitely many positive integers $n$.
\end{proof}

If $r \geq 2$, then for all integers $x_1, \ldots, x_r$ we set
\begin{equation*}
D_F(x_1, \ldots, x_r) := \det(\alpha_i^{x_j})_{1 \leq i, j \leq r} ,
\end{equation*}
and for any prime number $p$ not dividing $a_k$ we define $T_F(p)$ as the greatest integer $T \geq 0$ such that $p$ does not divide
\begin{equation*}
\prod_{1 \leq x_2, \ldots, x_r \leq T} \max\{1, |N_{\mathbb{K}} (D_F(0, x_2, \ldots, x_r))|\} ,
\end{equation*}
where the empty product is equal to $1$, and $N_\mathbb{K}(\alpha)$ denotes the norm of $\alpha \in \mathbb{K}$ over $\mathbb{Q}$.
It is known that such $T$ exists~\cite[p.~88]{MR1990179}. 
If $r = 1$, then we set $T_F(p) := +\infty$ for all prime numbers $p$ not dividing $a_1$. 

Finally, for all $\gamma > 0$, we define
\begin{equation*}
\mathcal{P}_{F,\gamma} := \{p : p \nmid a_k, \; T_F(p) < p^\gamma \} .
\end{equation*}
The next lemma shows that $T_F(p)$ is usually larger than a power of $p$.

\begin{lem}\label{lem:PFgamma}
For all $\gamma \in {]0,1/r]}$ and $x \geq 2^{1/ \gamma}$, we have
\begin{equation*}
\#\mathcal{P}_{F,\gamma}(x) \ll_F \frac{x^{r\gamma}}{\gamma \log x} .
\end{equation*}
\end{lem}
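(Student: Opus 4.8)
The plan is to exhibit a single positive integer $Q$, with $\log Q\ll_F x^{r\gamma}$, that is divisible by every prime of $\mathcal{P}_{F,\gamma}(x)$, and then to count the prime divisors of $Q$ by separating small primes from large ones; the factor $1/(\gamma\log x)$ will come precisely from this separation. If $r=1$ then $T_F(p)=+\infty$ for every admissible $p$, so $\mathcal{P}_{F,\gamma}=\varnothing$ and there is nothing to prove; thus assume $r\ge 2$. For $T\ge 0$ write $P(T):=\prod_{1\le x_2,\dots,x_r\le T}\max\{1,|N_{\mathbb{K}}(D_F(0,x_2,\dots,x_r))|\}$, so that $T_F(p)$ is the largest $T$ with $p\nmid P(T)$, and set $T_0:=\lceil x^\gamma\rceil$ and $Q:=P(T_0)$.

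First I would record the monotonicity $P(T)\mid P(T')$ for $T\le T'$, which holds because the index set of the former product is contained in that of the latter and every factor is a positive integer. If $p\in\mathcal{P}_{F,\gamma}(x)$, then $p\le x$ gives $T_F(p)<p^\gamma\le x^\gamma\le T_0$, hence $T_F(p)+1\le T_0$; by the maximality in the definition of $T_F(p)$ we have $p\mid P(T_F(p)+1)$, and therefore $p\mid P(T_0)=Q$.

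Next I would bound $\log Q$. Since $\psi_F$ is monic with integer coefficients, each $\alpha_i$ and all of its conjugates are algebraic integers, so there is a constant $A=A_F\ge 1$ with $|\sigma(\alpha_i)|\le A$ for every embedding $\sigma\colon\mathbb{K}\hookrightarrow\mathbb{C}$. Expanding $D_F(0,x_2,\dots,x_r)$ as a sum of $r!$ monomials $\prod_j\alpha_{\pi(j)}^{x_j}$ with $\sum_j x_j\le (r-1)T_0$, and taking the product over the $[\mathbb{K}:\mathbb{Q}]$ embeddings, gives $\log\max\{1,|N_{\mathbb{K}}(D_F(0,x_2,\dots,x_r))|\}\ll_F T_0$ for each tuple. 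As there are $T_0^{\,r-1}$ tuples and $T_0\le 2x^\gamma$ (because $x^\gamma\ge 2$), I obtain $\log Q\ll_F T_0^{\,r}\ll_F x^{r\gamma}$; call this bound $\log Q\le Cx^{r\gamma}$ with $C=C_F$.

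Finally I would count. For any $y\ge 2$ the primes of $\mathcal{P}_{F,\gamma}(x)$ that are $\le y$ number at most $\pi(y)\le y$, while those exceeding $y$ all divide $Q$ and hence number at most $\log Q/\log y\le Cx^{r\gamma}/\log y$, since their product divides $Q$. Choosing $y:=x^{r\gamma}/(\gamma\log x)$, which is $\ge 2$ in the stated range, I would verify the key inequality $\log y\ge(r-1)\gamma\log x\ge\gamma\log x$: writing $u:=x^\gamma\ge 2$ this reads $\log(u^r/\log u)=r\log u-\log\log u\ge(r-1)\log u$, which follows from $\log\log u\le\log u$. Substituting yields $\#\mathcal{P}_{F,\gamma}(x)\le y+Cx^{r\gamma}/\log y\le(1+C)\,x^{r\gamma}/(\gamma\log x)$, as desired. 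The step I expect to be most delicate is this last one — choosing $y$ and checking $\log y\gg\gamma\log x$ uniformly for all $\gamma\in{]0,1/r]}$ and $x\ge 2^{1/\gamma}$ — since that is exactly where the saving $1/(\gamma\log x)$ over the trivial count $x^{r\gamma}$ is extracted; the size estimate for $\log Q$, by contrast, is routine.
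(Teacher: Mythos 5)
Your proof is correct. Note that the paper gives no argument of its own here: it proves the lemma by citing Lemma~2.1 of Alba~Gonz\'alez--Luca--Pomerance--Shparlinski \cite{MR2928495}, and what you have written is essentially a reconstruction of that standard argument --- every prime of $\mathcal{P}_{F,\gamma}(x)$ divides the single integer $Q=P(\lceil x^\gamma\rceil)$, whose logarithm is $\ll_F x^{r\gamma}$, and the count follows by splitting the prime divisors of $Q$ at the threshold $y=x^{r\gamma}/(\gamma\log x)$. All steps check out, including the two points that need care: the degenerate case $r=1$ (where $\mathcal{P}_{F,\gamma}=\varnothing$), and the uniformity in $\gamma$ of the final substitution, since $\log y\ge\gamma\log x$ reduces, with $u:=x^\gamma\ge 2$, to $\log u\ge\log\log u$.
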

\begin{proof}
See \cite[Lemma~2.1]{MR2928495}.
\end{proof}

From the previous estimate is easy to deduce the following bound.

\begin{lem}\label{lem:pTFp}
We have
\begin{equation*}
\sum_{p > y} \frac1{p T_F(p)} \ll_{F} \frac1{y^{1/(r+1)}} ,
\end{equation*}
for all sufficiently large $y$.
\end{lem}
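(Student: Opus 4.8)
The plan is to decompose the sum according to the size of $T_F(p)$ relative to a power of $p$, playing a trivial pointwise bound against the sparseness estimate of Lemma~\ref{lem:PFgamma}. First I would dispose of trivialities and well-definedness. If $r = 1$ then $T_F(p) = +\infty$ and every summand vanishes, so I may assume $r \geq 2$. Only finitely many primes divide $a_k$, so for $y$ large every $p > y$ has $T_F(p)$ defined; moreover one checks $T_F(p) \geq 1$ for all but finitely many $p$: when $r \geq 3$ the determinant $D_F(0,1,\dots,1)$ has two equal columns and vanishes, forcing $T_F(p) \geq 1$ for every such $p$, while for $r = 2$ one has $T_F(p) \geq 1$ unless $p$ divides the fixed nonzero integer $N_{\mathbb{K}}(\alpha_2 - \alpha_1)$. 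Hence, taking $y$ large enough, every term with $p > y$ is well defined and satisfies $T_F(p) \geq 1$. I then set $\gamma := 1/(r+1)$ and split the sum as $\Sigma_1 + \Sigma_2$, where $\Sigma_1$ runs over primes $p > y$ with $T_F(p) \geq p^{\gamma}$ and $\Sigma_2$ over primes $p > y$ with $T_F(p) < p^{\gamma}$, the latter being exactly the primes of $\mathcal{P}_{F,\gamma}$ exceeding $y$.

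For $\Sigma_1$ I would use the crude pointwise bound $\frac{1}{p\,T_F(p)} \leq p^{-1-\gamma}$, so that
\[
\Sigma_1 \;\leq\; \sum_{n > y} n^{-1-\gamma} \;\ll\; \frac{y^{-\gamma}}{\gamma} \;\ll_F\; y^{-1/(r+1)} ,
\]
since $\gamma$ depends only on $r$, hence on $F$.

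For $\Sigma_2$ I would first use $T_F(p) \geq 1$ to bound $\frac{1}{p\,T_F(p)} \leq \frac1p$, reducing matters to $\sum_{p > y,\, p \in \mathcal{P}_{F,\gamma}} 1/p$. Writing $P(t) := \#\mathcal{P}_{F,\gamma}(t)$ and applying partial summation gives
\[
\sum_{\substack{p > y \\ p \in \mathcal{P}_{F,\gamma}}} \frac1p \;\leq\; \int_y^{\infty} \frac{P(t)}{t^2}\,dt ,
\]
where the boundary term at infinity vanishes because $P(t) = o(t)$. Now Lemma~\ref{lem:PFgamma} applies with this $\gamma \in (0, 1/r]$ (note $1/(r+1) \leq 1/r$) and for $t \geq y \geq 2^{1/\gamma}$, yielding $P(t) \ll_F t^{r\gamma}/(\gamma \log t)$. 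The crucial point is that $r\gamma = r/(r+1) < 1$, so the integral converges at infinity and
\[
\int_y^{\infty}\frac{P(t)}{t^2}\,dt \;\ll_F\; \frac{1}{\gamma\log y}\int_y^{\infty} t^{\,r\gamma - 2}\,dt \;=\; \frac{1}{\gamma\log y}\cdot\frac{y^{\,r\gamma - 1}}{1 - r\gamma} \;\ll_F\; \frac{y^{-1/(r+1)}}{\log y} ,
\]
using the identities $r\gamma - 1 = -\gamma = -1/(r+1)$ and $1 - r\gamma = \gamma = 1/(r+1)$. Combining the two estimates gives $\Sigma_1 + \Sigma_2 \ll_F y^{-1/(r+1)}$, as required.

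The main obstacle is not analytic but one of bookkeeping: choosing the threshold exponent $\gamma$ so that the two competing contributions balance to give exactly the power $y^{-1/(r+1)}$. A larger $\gamma$ sharpens $\Sigma_1$ but demands $r\gamma < 1$ for $\Sigma_2$ to converge with room to spare, and $\gamma = 1/(r+1)$ is the natural choice making $r\gamma - 1 = -\gamma$; any other split either weakens the exponent or breaks the convergence in the integral. The other point requiring genuine care — though it resolves to finitely many exceptions — is the reduction to $y$ large enough that the primes with $p \mid a_k$ or $T_F(p) = 0$ are excluded (so that the summands are even defined and the bound $1/(p\,T_F(p)) \leq 1/p$ is available), and that the hypothesis $t \geq 2^{1/\gamma}$ of Lemma~\ref{lem:PFgamma} holds throughout the range of integration.
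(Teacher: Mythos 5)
Your proof is correct and takes essentially the same route as the paper: the identical dichotomy between primes in and outside $\mathcal{P}_{F,\gamma}$, the pointwise bound $1/(pT_F(p)) \leq p^{-1-\gamma}$ on the complement, partial summation against Lemma~\ref{lem:PFgamma} on $\mathcal{P}_{F,\gamma}$ itself, and the balancing choice $\gamma = 1/(r+1)$. The only (harmless) differences are that you fix $\gamma$ at the outset rather than at the end, and you make explicit the finitely-many-exceptions bookkeeping (the case $r=1$, primes dividing $a_k$, and $T_F(p) \geq 1$) that the paper leaves implicit in the phrase ``for all sufficiently large $y$.''
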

\begin{proof}
We split the series into two parts, separating between prime numbers which belong to $\mathcal{P}_{F,\gamma}$ and which do not.
In the first case, by partial summation and Lemma~\ref{lem:PFgamma}, for a fixed $\gamma \in {]0,1/r[}$, we find
\begin{equation}\label{equ:2.4}
\sum_{\substack{p > y \\ p \in \mathcal{P}_{F,\gamma}}} \frac1{pT_F(p)} \leq \sum_{\substack{p > y \\ p \in \mathcal{P}_{F,\gamma}}} \frac1{p} = \left[\frac{\#\mathcal{P}_{F,\gamma}(t)}{t}\right]_{t = y}^{+\infty} + \int_y^{+\infty} \frac{\#\mathcal{P}_{F,\gamma}(t)}{t^2}\mathrm{d}t \ll_{F,\gamma} \frac1{y^{1 - r\gamma}}.
\end{equation}
On the other hand, in the second case we get
\begin{equation}
\label{equ:2.5}
\sum_{\substack{p > y \\ p \notin \mathcal{P}_{F,\gamma}}} \frac1{pT_F(p)} \leq \sum_{\substack{p > y }} \frac1{p^{1 + \gamma}} \ll \int_y^{+\infty} \frac{\mathrm{d}t}{t^{1 + \gamma}} \ll_\gamma \frac1{y^{\gamma}}.
\end{equation}
If we put $\gamma:=1/(r+1)$ and collect together the estimates \eqref{equ:2.4} and \eqref{equ:2.5} we obtain the thesis.
\end{proof}

The next lemma is an upper bound in terms of $T_F(p)$ for the number of solutions of a certain congruence modulo $p$ involving $F$.
The proof proceeds essentially like the one of \cite[Lemma~2.2]{MR2928495}, which in turn relies on previous arguments given in~\cite{MR917803} (see also~\cite{Shp87} and \cite[Theorem~5.11]{MR1990179}).
We include it for completeness.

\begin{lem}\label{lem:Fpml}
Let $p$ be a prime number dividing neither $a_k$ nor the denominator of any of the coefficients of $f_1, \ldots, f_r$.
Moreover, let $\ell \geq 0$ be an integer such that $f_1(\ell), \ldots, f_r(\ell)$ are not all zero modulo some prime ideal of $\mathcal{O}_\mathbb{K}$ lying over $p$.
Then, for all $x > 0$, the number of integers $m \in [0, x]$ such that $F(pm + \ell) \equiv 0 \pmod p$ is
\begin{equation*}
O_r\!\left(\frac{x}{T_F(p)} + 1\right) .
\end{equation*}
\end{lem}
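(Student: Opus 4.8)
The plan is to pass to a prime ideal $\mathfrak{p}$ of $\mathcal{O}_{\mathbb{K}}$ lying over $p$ at which $f_1(\ell), \dots, f_r(\ell)$ are not all zero, which exists by hypothesis. Since $\mathfrak{p} \cap \mathbb{Z} = p\mathbb{Z}$, any $m$ with $F(pm+\ell) \equiv 0 \pmod{p}$ satisfies $F(pm+\ell) \equiv 0 \pmod{\mathfrak{p}}$, so it is enough to bound the count of $m \in [0,x]$ in the latter set. Reducing \eqref{equ:genpowsum} modulo $\mathfrak{p}$, and using both $pm + \ell \equiv \ell \pmod{\mathfrak{p}}$ and the $\mathfrak{p}$-integrality of the coefficients of the $f_i$ (valid because $p$ divides none of their denominators), I would obtain
\[
F(pm+\ell) \equiv \sum_{i=1}^{r} f_i(\ell)\,\alpha_i^{\ell}\,(\alpha_i^{p})^{m} \pmod{\mathfrak{p}}.
\]
Because $p \nmid a_k$ and $a_k$ equals, up to sign, the product of the roots of $\psi_F$ counted with multiplicity, each $\alpha_i$ is a unit modulo $\mathfrak{p}$; hence the coefficients $f_i(\ell)\alpha_i^{\ell}$ are still not all zero modulo $\mathfrak{p}$.

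If $r = 1$ the single term is a $\mathfrak{p}$-unit, so there are no solutions, in agreement with the convention $T_F(p) = +\infty$; thus assume $r \geq 2$. I would then separate according to the size of $T_F(p)$. When $T_F(p) \leq r-2$ the trivial bound suffices, since the number of solutions is at most $x + 1 \ll_r x/T_F(p) + 1$ (the right-hand side being $+\infty$ when $T_F(p) = 0$). The essential case is $T_F(p) \geq r-1$. Here the crucial preliminary observation is that the generalized Vandermonde determinant $D_F(0,1,\dots,r-1) = \prod_{1 \leq i < j \leq r}(\alpha_j - \alpha_i)$ occurs as one of the factors in the product defining $T_F(p)$; therefore $T_F(p) \geq r-1$ forces $p \nmid N_{\mathbb{K}}(D_F(0,1,\dots,r-1))$, which means the reductions $\bar\alpha_1, \dots, \bar\alpha_r$ are pairwise distinct in the residue field $\mathcal{O}_{\mathbb{K}}/\mathfrak{p}$.

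This distinctness is precisely what allows the argument to avoid an otherwise fatal loss of a factor $p$. Since the $p$-power Frobenius permutes the roots of $\psi_F$ reduced modulo $\mathfrak{p}$, there is a permutation $\sigma$ of $\{1,\dots,r\}$ with $\alpha_i^{p} \equiv \alpha_{\sigma(i)} \pmod{\mathfrak{p}}$, and after reindexing the congruence above reads $\sum_{i} c_i'\,\alpha_i^{m} \equiv 0 \pmod{\mathfrak{p}}$, where the $c_i'$ are a permutation of the $f_i(\ell)\alpha_i^{\ell}$ and so are not all zero. Now suppose $m_1 < \dots < m_r$ are solutions contained in a single interval of length at most $T_F(p)$. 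Regarding these as a homogeneous linear system in the unknowns $c_i'$, its coefficient matrix $(\alpha_i^{m_s})_{i,s}$ has determinant $D_F(m_1,\dots,m_r)$, which upon factoring out the unit $\prod_i \alpha_i^{m_1}$ agrees up to a unit with $D_F(0, m_2 - m_1, \dots, m_r - m_1)$; as the differences $m_s - m_1$ are distinct integers in $[1, T_F(p)]$, the defining property of $T_F(p)$ makes this a $\mathfrak{p}$-unit, forcing all $c_i' \equiv 0 \pmod{\mathfrak{p}}$, a contradiction. Consequently every interval of length $T_F(p)$ holds at most $r-1$ solutions, and covering $[0,x]$ by $O(x/T_F(p) + 1)$ such intervals gives the asserted bound $O_r(x/T_F(p) + 1)$.

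The step I expect to be the main obstacle — and the genuine crux of the proof — is the Frobenius substitution $\alpha_i^{p} \equiv \alpha_{\sigma(i)}$. A naive Vandermonde argument applied directly to the exponents $pm_s$ lands on the determinant $D_F(0, p(m_2 - m_1), \dots, p(m_r - m_1))$, whose entries are only controlled once $p(m_r - m_1) \leq T_F(p)$, yielding the weaker bound $O_r(xp/T_F(p) + 1)$. Replacing $\alpha_i^{p}$ by $\alpha_{\sigma(i)}$ — legitimate exactly because $T_F(p) \geq r-1$ guarantees distinct residues — restores the small differences $m_s - m_1$ and is what produces the sharp dependence on $T_F(p)$.
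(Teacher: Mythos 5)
Your overall architecture is sound: the reduction modulo a prime ideal $\mathfrak{p}$ above $p$, the observation that $T_F(p)\geq r-1$ forces $p\nmid N_{\mathbb{K}}(D_F(0,1,\dots,r-1))$ and hence pairwise distinct residues $\bar\alpha_1,\dots,\bar\alpha_r$, the Frobenius permutation, and the covering of $[0,x]$ by intervals of length $T_F(p)$ are all legitimate. The gap is in the sentence ``as the differences $m_s-m_1$ are distinct integers in $[1,T_F(p)]$, the defining property of $T_F(p)$ makes this a $\mathfrak{p}$-unit.'' The definition of $T_F(p)$ involves $\max\{1,|N_{\mathbb{K}}(D_F(0,x_2,\ldots,x_r))|\}$: it guarantees $p\nmid N_{\mathbb{K}}(D_F(0,x_2,\ldots,x_r))$ \emph{only when} $D_F(0,x_2,\ldots,x_r)\neq 0$, and gives no information at all when that determinant vanishes. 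For $r\geq 3$ a generalized Vandermonde determinant $\det(\alpha_i^{x_j})$ with non-consecutive exponents can vanish even though the $\alpha_i$ are pairwise distinct and $F$ is non-degenerate (non-degeneracy rules out vanishing only in the case $r=2$, where $D_F(0,x_2)=\alpha_2^{x_2}-\alpha_1^{x_2}$). If $D_F(0,m_2-m_1,\ldots,m_r-m_1)=0$ in $\mathbb{K}$, its reduction modulo $\mathfrak{p}$ is zero, your homogeneous system is singular, and you cannot conclude $c_i'\equiv 0$; so the claim that an interval of length $T_F(p)$ contains at most $r-1$ solutions is unjustified, and with it the final bound.

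This vanishing problem is precisely where the paper's proof uses its one genuinely deep ingredient, a theorem of Schlickewei: any non-trivial exponential equation $\sum_{i=1}^r B_i\alpha_i^x=0$ has at most $C(r)$ solutions in positive integers $x$. Supposing an interval of length $T_F(p)$ contains $s\geq C(r)+r$ solutions, the paper selects, iteratively among the differences $m_j-m_1$, exponents $x_2,\dots,x_r$ with $\det(\alpha_i^{x_j})_{1\leq i,j\leq t}\neq 0$ at every stage $t$; each selection is possible because the equation $\det(\alpha_i^{x_j})_{1\leq i,j\leq t}=0$, viewed as an exponential equation in the last exponent $x_t$, is non-trivial by the previous stage and so excludes at most $C(r)$ of the available differences. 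This produces a provably non-vanishing $D_F(0,x_2,\dots,x_r)$ to which the definition of $T_F(p)$ actually applies, giving at most $C(r)+r-1=O_r(1)$ solutions per interval -- your count of $r-1$ is what one would get if vanishing never occurred. Incidentally, the paper also avoids your Frobenius-permutation step: from $\det(\alpha_i^{px_j})\equiv 0 \pmod{\mathfrak{p}}$ it deduces $N_{\mathbb{K}}(\det(\alpha_i^{x_j}))^p\equiv N_{\mathbb{K}}(\det(\alpha_i^{px_j}))\equiv 0\pmod p$ via the Frobenius congruence, so the exponents $px_j$ never need to be replaced by $x_j$ through a permutation of the roots; the factor of $p$ you worried about disappears at the level of norms, and no case distinction on $T_F(p)\geq r-1$ is needed.
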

\begin{proof}
For $r = 1$ the claim can be proved quickly using \eqref{equ:genpowsum}.
Hence, assume $r \geq 2$.
Let $\mathcal{I}$ be an interval of $T_F(p)$ consecutive non-negative integers, and let $m_1 < \cdots < m_s$ be all the integers $m \in \mathcal{I}$ such that $F(pm + \ell) \equiv 0 \pmod p$.
Also, let $\pi$ be a prime ideal of $\mathcal{O}_\mathbb{K}$ lying over $p$.
Then, by \eqref{equ:genpowsum}, and since no denominator of the coefficients of $f_1, \dots, f_r$ belongs to $\pi$, we have
\begin{equation}\label{equ:Fpmell0}
\sum_{i = 1}^r f_i(\ell)(\alpha_i)^{\ell+pm_1} \,(\alpha_i^{p})^{m_j - m_1} \equiv \sum_{i = 1}^r f_i(pm_j + \ell)\,\alpha_i^{pm_j + \ell} \equiv 0 \pmod\pi ,
\end{equation}
for $j = 1, \dots, s$.
By a result of Schlickewei~\cite{MR1397562}, there exists a constant $C(r)$, depending only on $r$, such that for any $B_1, \dots, B_r \in \mathbb{K}$, not all zero, the exponential equation
\begin{equation*}
\sum_{i=1}^r B_i \alpha_i^x = 0
\end{equation*}
has at most $C(r)$ solutions in positive integers $x$.
Suppose $s \geq C(r) + r$.
Put $x_1 := 0$ and, setting $\mathcal{X}_2 := \{m_j - m_1 : j=2,\dots,s\}$, pick some $x_2 \in \mathcal{X}_2$ such that
\begin{equation*}
\det(\alpha_i^{x_j})_{1 \leq i,j \leq 2} \neq 0 .
\end{equation*}
This is possible by the mentioned result of Schlickewei, since 
\begin{equation*}
\#\mathcal{X}_2 = s - 1 \geq C(r) + r - 1 > C(r) .
\end{equation*}
For $r \geq 3$, set $\mathcal{X}_3 := \mathcal{X}_2 \setminus \{x_2\}$ and pick $x_3 \in \mathcal{X}_3$ such that
\begin{equation}\label{equ:nontrivialdet}
\det(\alpha_i^{x_j})_{1 \leq i,j \leq 3} \neq 0 .
\end{equation}
Again, this is still possible since, by the choice of $x_2$, \eqref{equ:nontrivialdet} is a non-trivial exponential equation and 
\begin{equation*}
\#\mathcal{X}_3 = s - 2 \geq C(r) + r - 2 > C(r) .
\end{equation*}
Continuing this way, after $r - 1$ steps, we obtain integers $x_2, \dots, x_r \in {[1, T_F(p)[}$ such that
\begin{equation}\label{equ:Dneq0}
D_F(0, x_2, \ldots, x_r) \neq 0 .
\end{equation}
Now, since $f_i(\ell)$ are not all zero modulo $\pi$, by \eqref{equ:Fpmell0} we get
\begin{equation*}
\det(\alpha_i^{px_j})_{1 \leq i,j \leq r} \equiv 0 \pmod \pi ,
\end{equation*}
so that
\begin{equation*}
N_\mathbb{K}(D_F(0, x_2, \ldots, x_r))^p = N_\mathbb{K}(\det(\alpha_i^{x_j}))^p \equiv N_\mathbb{K}(\det(\alpha_i^{px_j})) \equiv 0 \pmod p ,
\end{equation*}
which is impossible by the definition of $T_F(p)$ and condition \eqref{equ:Dneq0}.
Hence, $s < C(r) + r$ and the desired claim follows easily.
\end{proof}

We conclude this section with the next lemma.

\begin{lem}\label{lem:fiellnonzero}
If $\gcd(G, f_1, \dots, f_r) = 1$ then there are only finitely many prime numbers $p$ such that $p \mid G(\ell)$, for some integer $\ell$, and $f_1(\ell), \dots, f_r(\ell)$ are all zero modulo some prime ideal of $\mathcal{O}_\mathbb{K}$ lying over $p$.
\end{lem}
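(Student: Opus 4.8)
The plan is to convert the coprimality hypothesis into a Bézout identity over the ring of integers $\mathcal{O}_\mathbb{K}$ and then reduce it modulo a prime ideal $\pi$ lying above the prime $p$. First I would clear denominators: fix a positive integer $c$ for which $\widetilde{G} := cG$ and $\widetilde{f}_i := cf_i$ all lie in $\mathcal{O}_\mathbb{K}[X]$. Multiplying by the nonzero constant $c$ creates no common root, so $\gcd(\widetilde{G}, \widetilde{f}_1, \dots, \widetilde{f}_r) = 1$ still holds in the PID $\mathbb{K}[X]$ (recall that the gcd of polynomials is unchanged, up to a unit, under base-field extension, so the hypothesis is literally a statement about having no common root). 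Bézout then provides $w_0, \dots, w_r \in \mathbb{K}[X]$ with $w_0 \widetilde{G} + \sum_i w_i \widetilde{f}_i = 1$, and choosing a positive integer $D$ that clears the coefficient denominators of the $w_i$ and setting $v_i := D\,w_i \in \mathcal{O}_\mathbb{K}[X]$ yields the polynomial identity
\begin{equation*}
v_0 \widetilde{G} + \sum_{i=1}^r v_i \widetilde{f}_i = D ,
\end{equation*}
in which every term stays in $\mathcal{O}_\mathbb{K}[X]$.

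Next I would run the valuation argument on the primes not dividing the fixed integer $cD$. Let $p \nmid cD$ and suppose there exist an integer $\ell$ and a prime ideal $\pi \mid p$ of $\mathcal{O}_\mathbb{K}$ such that $p \mid G(\ell)$ and $f_1(\ell), \dots, f_r(\ell)$ all lie in $\pi$. Since $p \nmid c$, the rational integer $c$ is a $\pi$-unit, so $\widetilde{f}_i(\ell) = c\,f_i(\ell) \in \pi$ for every $i$; moreover $\widetilde{G}(\ell) = c\,G(\ell) \in p\mathbb{Z} \subseteq \pi$ because $p \mid G(\ell)$ and $p \in \pi$. Evaluating the displayed identity at $\ell$ and reducing modulo $\pi$ (using $v_j(\ell) \in \mathcal{O}_\mathbb{K}$) forces $D \in \pi$. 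But $D$ is a rational integer, whence $D \in \pi \cap \mathbb{Z} = p\mathbb{Z}$, i.e.\ $p \mid D$, contradicting $p \nmid D$. Consequently every prime $p$ enjoying the stated property must divide the fixed nonzero integer $cD$, and there are only finitely many such primes, as required.

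I expect the only delicate point to be the bookkeeping with denominators: the coefficients of the $f_i$ and of the Bézout cofactors $w_i$ lie in $\mathbb{K}$ and need not be algebraic integers, so one must isolate a single clearing constant $c$ together with the integer $D$, and then discard the finitely many primes dividing $cD$. These are precisely the primes for which the congruence conditions might be ill-defined (e.g.\ when $p$ divides a denominator of some coefficient of $f_i$) or for which the passage from $f_i(\ell)$ to $\widetilde{f}_i(\ell)$ could change the $\pi$-adic valuation. Because the conclusion only claims finiteness, discarding these exceptional primes is costless, and the identity above disposes of all the remaining primes simultaneously.
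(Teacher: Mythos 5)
Your proof is correct and follows essentially the same route as the paper: a B\'ezout identity $Gh_0 + f_1h_1 + \cdots + f_rh_r = 1$ in $\mathbb{K}[X]$, clearing denominators to land in $\mathcal{O}_\mathbb{K}[X]$, evaluating at $\ell$, and reducing modulo $\pi$ to force $p$ to divide a fixed integer. The only difference is your additional constant $c$ clearing the denominators of $G, f_1, \dots, f_r$ themselves (the paper clears only the cofactors' denominators and works with $\pi$-integrality of the $f_i(\ell)$ implicitly), which is a harmless refinement, not a change of method.
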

\begin{proof}
By B\'{e}zout's theorem for polynomials in $\mathbb{K}[X]$, there exist $h_0,\dots, h_r \in \mathbb{K}[X]$ such that
\begin{equation*}
G h_0+f_1 h_1+\cdots+f_r h_r = 1.
\end{equation*}
Let $B$ be a positive integer such that all the coefficients of $Bh_0,\dots,Bh_r$ are algebraic integers.
If $\pi$ is a prime ideal of $\mathcal{O}_\mathbb{K}$ lying over $p$ such that $f_1(\ell), \dots, f_r(\ell)$ are all zero modulo $\pi$, then
\begin{equation*}
B \equiv B G(l) h_0(l)+Bf_1(l) h_1(l)+\cdots+ Bf_r(l)h_r(l) \equiv 0 \pmod \pi ,
\end{equation*}
since $p \mid G(\ell)$.
Hence, $p \mid B$ and this is possible only for finitely many primes $p$.
\end{proof}

\section{Proof of Theorem~\ref{thm:main}}

We begin by proving that $\mathcal{A}_{F,G,h}$ has a natural density.
First, in light of Lemma~\ref{lem:disjointunion}, without loss of generality, we can assume that $F$ is non-degenerate and not identically zero, and that $h = 1$.
By Lemma~\ref{lem:commonfactor}, if $G, f_1, \dots, f_r$ share a non-trivial common factor then $\mathcal{A}_{F,G}$ is finite and, obviously, $\mathbf{d}(\mathcal{A}_{F,G}) = 0$.
Hence, assume $\gcd(G, f_1, \dots, f_r) = 1$.

Put $\mathcal{C}_{F,G} := \mathbb{N} \setminus \mathcal{A}_{F,G}$ so that, equivalently, we have to prove that the natural density of $\mathcal{C}_{F,G}$ exists. 
For each $y > 0$, we split $\mathcal{C}_{F,G}$ into two subsets:
\begin{align*}
\mathcal{C}_{F,G,y}^- &:= \big\{n \in \mathcal{C}_{F,G} : p \mid \gcd(G(n), F(n)) \text{ for some } p \leq y\big\} , \\
\mathcal{C}_{F,G,y}^+ &:= \mathcal{C}_{F,G} \setminus \mathcal{C}_{F,G,y}^- .
\end{align*}
Recalling that $F,G$ are definitively periodic modulo $p$, for any prime number $p$, we see that $\mathcal{C}_{F,G,y}^-$ is a union of finitely many arithmetic progressions and a finite subset of $\mathbb{N}$. 
In particular, $\mathcal{C}_{F,G,y}^-$ has a natural density.
If we put $\delta_y := \mathbf{d}(\mathcal{C}_{F,G,y}^-)$, then it is clear that $\delta_y$ is a bounded non-decreasing function of $y$.
Hence, the limit 
\begin{equation}\label{equ:3.1}
\delta := \lim_{y \to +\infty} \delta_y
\end{equation}
exists finite.
We shall prove that $\mathcal{C}_{F,G}$ has natural density $\delta$. 
If $n \in \mathcal{C}_{F,G,y}^+(x)$ then there exists a prime $p > y$ such that $p \mid G(n)$ and $p \mid F(n)$.
In particular, we can write $n = pm + \ell$, for some non-negative integers $m \leq x/p$ and $\ell < p$, with $p \mid G(\ell)$.
For sufficiently large $y$, how large depending only on $F,G$, we have that $p$ divide neither $a_k$ nor any of the denominators of the coefficients of $f_1,\dots , f_r$, and that, by Lemma~\ref{lem:fiellnonzero}, the terms $f_1(\ell), \dots, f_2(\ell)$ are not all zero modulo some prime ideal of $\mathcal{O}_\mathbb{K}$ lying over $p$.
On the one hand, by Lemma \ref{lem:Fpml}, the number of possible values for $m$ is 
\begin{equation*}
O_r\!\left(\frac{x}{pT_F(p)}+1\right) .
\end{equation*}
On the other hand,
for sufficiently large $y$, depending on $G$, the number of possible values for $\ell$ is at most $\deg(G)$.
Furthermore, we have $p \ll_G x$, since all the roots of $G$ are in $\mathbb{Q}$.
(Note that this property is preserved by the reduction to $\widetilde{G}$ in Lemma~\ref{lem:disjointunion}.)
Therefore, setting $\gamma := 1/(r+1)$, we get
\begin{equation}
\label{eq: 3.2}
\#\mathcal{C}_{F,G,y}^+(x) \ll_{F,G} \sum_{y < p \ll_G x} \left(\frac{x}{pT_F(p)} + 1\right) \ll_{F,G} \frac{x}{y^\gamma} + \frac{x}{\log x} ,
\end{equation}
where we used Lemma~\ref{lem:pTFp} and Chebyshev's estimate for the number of primes not exceeding $x$.
Thus, we obtain that 
\begin{align}\label{equ:3.3}
\limsup_{x \to +\infty} \left|\frac{\#\mathcal{C}_{F,G}(x)}{x} - \delta_y\right| &= \limsup_{x \to +\infty} \left|\frac{\#\mathcal{C}_{F,G}(x)}{x} - \frac{\#\mathcal{C}_{F,G,y}^-(x)}{x}\right| \\
 &= \limsup_{x \to +\infty} \frac{\#\mathcal{C}_{F,G,y}^+(x)}{x} \ll_{F,G} \frac1{y^{\gamma}} . \nonumber
\end{align}
Hence, letting $y \to +\infty$ in \eqref{equ:3.3} and using \eqref{equ:3.1}, we get that $\mathcal{C}_{F,G}$ has natural density~$\delta$.

At this point, assuming that $G$ has no fixed divisors, it remains only to prove that the natural density of $A_{F,G}$ is positive.
In turn, this is equivalent to $\delta < 1$.
Clearly,
\begin{equation*}
\mathcal{C}_{F,G,y}^- \subseteq \big\{n \in \mathbb{N} : p \mid G(n) \text{ for some } p \leq y\big\} .
\end{equation*}
Hence, by standard sieving arguments (see, e.g.,~\cite[\S1.2.3, Eq.~3.3]{MR1836967}), we have
\begin{equation*}
\frac{\#\mathcal{C}_{F,G,y}^-(x)}{x} \leq 1 - \prod_{p \leq y}\left(1 - \frac{\rho_G(p)}{p}\right) + O_G\!\left(\frac1{x}\sum_{d \mid P(y)}\rho_G(d)\right),
\end{equation*}
where $P(y):=\prod_{p\leq y}p$, while $\rho_G$ is the completely multiplicative function supported on squarefree numbers and satisfying
\begin{equation*}
\rho_G(p) := \frac{\#\left\{z \in \{1, \ldots, p^{1 + \nu_p(B)}\} : BG(z) \equiv 0 \!\!\!\!\pmod {p^{1 + \nu_p(B)}}\right\}}{p^{\nu_p(B)}} ,
\end{equation*}
for all prime numbers $p$, where $B$ is a positive integer such that $BG \in \mathbb{Z}[X]$.
Since $G$ has no fixed divisors, we have $\rho_G(p) < p$ for all prime numbers $p$.
Also, $\rho_G(p) \leq \deg(G)$ for all sufficiently large prime numbers $p$.
Therefore,
\begin{equation*}
\prod_{p \leq y}\left(1 - \frac{\rho_G(p)}{p}\right) \gg_G \frac1{(\log y)^{\deg(G)}} ,
\end{equation*}
if $y$ is large enough, which implies that 
\begin{equation}
\label{equ:3.4}
\limsup_{x \to +\infty} \frac{\#\mathcal{C}_{F,G,y}^-(x)}{x} \leq 1 - \frac{c_1}{(\log y)^{\deg(G)}},
\end{equation}
where $c_1 > 0$ is a constant depending on $G$.
Hence, putting together \eqref{equ:3.3} and \eqref{equ:3.4}, we get
\begin{align}
\label{equ:3.5}
\delta &= \lim_{x \to +\infty} \frac{\#\mathcal{C}_{F,G}(x)}{x} \leq \limsup_{x \to +\infty} \frac{\#\mathcal{C}_{F,G,y}^-(x)}{x} + \limsup_{x \to +\infty} \frac{\#\mathcal{C}_{F,G,y}^+(x)}{x} \\
&\leq 1 - \left(\frac{c_1}{(\log y)^{\deg(G)}}-\frac{c_2}{y^{\gamma}}\right) , \nonumber
\end{align}
where $c_2 > 0$ is a constant depending on $F,G$.
Finally, picking a sufficiently large $y$, depending on $c_1$ and $c_2$, the bound \eqref{equ:3.5} yields $\delta < 1$, as desired.
The proof of Theorem~\ref{thm:main} is complete.

\section{Concluding remarks}

\subsection{The case in which $G$ has a fixed divisor}
Suppose that $F$ is a non-degenerate integral linear recurrence and that $G$ is an integer-valued polynomial with all roots in $\mathbb{Q}$ and having a fixed divisor $d > 1$.
In order to study $\mathcal{A}_{F,G}$, one could try to reduce from this general situation to the one where there is no fixed divisor, so that Theorem~\ref{thm:main} can be applied.
However, the strategy used in Lemma~\ref{lem:disjointunion}, that is, writing $\mathcal{A}_{F,G}$ as the disjoint union of a finite set and finitely many sets of the form $a\mathcal{A}_{\widetilde{F}, \widetilde{G}} + b$, this time does not work.
The issue here is that the resulting polynomials $\widetilde{G}$ may have fixed divisors. 
For example, let $F$ be the Fibonacci sequence and $G(n) = n(n + 1)$, so that $d = 2$.
Then, $2 \nmid F(n)$ if and only if $n \equiv 1,2 \pmod 3$, so that $\mathcal{A}_{F,G}$ is the disjoint union of $\mathcal{A}_{\widetilde{F}_1,\widetilde{G}_1}$ and $\mathcal{A}_{\widetilde{F}_2,\widetilde{G}_2}$, where $\widetilde{F}_i(m) = F(3m + i)$ and $\widetilde{G}_i(m) = G(3m + i) / 2$, for $i=1,2$.
Now, $G_1(m) = (9m^2 + 9m + 2) / 2$ has no fixed divisors, but $G_2(m) = (9m^2 + 15m + 6) / 2$ gained $3$ as a new fixed divisor.

\subsection{The case in which $G$ does not split over the rationals}

We note that there are examples of integral linear recurrences $F$ and integer-valued polynomials $G$, not splitting over the rationals, such that $\mathcal{A}_{F,G}$ has a positive density for elementary reasons.
For instance, for the following couple
\begin{equation*}
F(n) = (n^{2} + 1)5^{n} + (n^{2} + 2)3^{n}, \quad G(n) = (n^{2}+1)(n^{2}+2) ,
\end{equation*}
we have $\mathcal{A}_{F,G} = \mathbb{N}$.
Indeed, suppose by contradiction that there exists a prime $p$ dividing both $F(n)$ and $G(n)$.
Then, $p\mid (n^{2}+1)$ or $p\mid (n^{2}+2)$, exclusively.
In the first case, since $p\mid F(n)$, we get $p\mid 3^{n}$, that is, $p = 3$, which is not possible, since $n^2 + 1$ is never a multiple of $3$.
The second case is similar.

However, except for those easy situations, we think that if $G$ does not split over the rationals, then the study of $\mathcal{A}_{F,G}$ requires different methods than those employed in this paper.
In fact, if $p \mid G(n)$ we can only say that $p \ll_G x^{\deg(G)}$ and, for $\deg(G) \geq 2$, this does not allow one to conclude that $\limsup_{x \to +\infty} \mathcal{C}_{F,G,y}^{+}(x) / x = o((\log y)^{-\deg(G)})$, as $y \to +\infty$, which is a key step in the proof of Theorem~\ref{thm:main}.
Actually, in the following we provide a heuristic for the claim that $\mathcal{C}_{F,G,y}^{+}(x) \gg x$ for all $y$.
First, we can split $\mathcal{C}_{F,G,y}^{+}(x)$ into two parts: 
the first one is 
\begin{equation*}
\{n\leq x : \gcd(F(n), G(n))\neq 1\ \mathrm{and}\ p\mid\gcd(F(n), G(n)) \Rightarrow y<p\leq x \},
\end{equation*}
which can be handled as in \eqref{eq: 3.2}, whereas the second one is 
\begin{equation}\label{equ:heuristic}
\{n\leq x: \exists p\mid\gcd(F(n), G(n))\ \mathrm{with}\ p>x\} ,
\end{equation}
which, by our heuristic, we believe it should have a cardinality $\gg x$. 

For the sake of simplicity, we consider only the case where $F$ is the Fibonacci sequence and $G(n)=n^{2}+1$. 
By a result of Everest and Harman about the existence of primitive divisors of quadratic polynomials \cite[Theorem~1.4]{MR2428520}, we have 
\begin{equation*}
\#\!\left\{n \leq x : \exists p > x \text{ with } p \mid G(n)\right\} \gg x ,
\end{equation*}
so that
\begin{equation*}
\mathbb{P}_{x}\!\left[\exists p > x \text{ with } p \mid G(n) \right] \gg 1,
\end{equation*}
where we consider the events in the probability space $([x], \mathcal{P}[x], \mathbb{P}_{x})$, with $[x]=\{n\leq x\}$ and $\mathbb{P}_{x}$ is the discrete uniform measure on $[x]$.
Let $z_{F}(m)$ be the least positive integer $n$ such that $m\mid F(n)$. 
It is well known that $p \mid F(n)$ if and only if $z_F(p)\mid n$.
This means that $\mathbb{P}_x\!\left[p \mid F(n)\right]$ is roughly $1/z_F(p)$.
Therefore, interpreting the events of being divisible by different prime numbers as independent, we expect that
\begin{align*}
&\mathbb{P}_{x}\!\left[\exists p > x \text{ with } p \mid F(n) \right] \geq 1- \mathbb{P}_x\!\left[p \nmid F(n) \text{ for all } p \text{ with }x < p \ll x^2\right] \\
=1- &\prod_{ p:\ x<p\ll x^2 }\left(1-\frac1{z_F(p)}\right) > 1-\prod_{ p:\ x<p\ll x^2 }\left(1-\frac1{p+1}\right) > 1/2 + o(1),
\end{align*}
as $x \to +\infty$, since $z_F(p)\leq p+1$ and thanks to Mertens' Theorem. 
Assuming independence between the events that a prime divides $F(n)$ or $G(n)$, we deduce that the expected value of the cardinality of \eqref{equ:heuristic} is
\begin{align*}
&\sum_{n \leq x}\mathbb{P}_{x}\!\left[\exists p > x \text{ with } p \mid \gcd(F(n),G(n)) \right] \\
= \sum_{n \leq x} \;&\mathbb{P}_{x}\!\left[\exists p > x \text{ with } p \mid F(n) \right] \cdot \mathbb{P}_{x}\!\left[\exists p > x \text{ with } p \mid G(n) \right] \gg x,
\end{align*}
as claimed.

\bibliographystyle{amsplain}

\end{document}